\documentclass{amsart}

\usepackage[utf8]{inputenc}
\usepackage[T1]{fontenc}
\usepackage{amsmath,amssymb,amsthm,mathtools}
\usepackage{enumitem}
\usepackage{comment}
\usepackage{mathrsfs}
\usepackage[colorlinks=true,linkcolor=blue,citecolor=blue,urlcolor=blue]{hyperref}

\newtheorem{theorem}{Theorem}[section]
\newtheorem{proposition}[theorem]{Proposition}
\newtheorem{lemma}[theorem]{Lemma}
\newtheorem{corollary}[theorem]{Corollary}
\theoremstyle{remark}
\newtheorem{remark}[theorem]{Remark}

\newcommand{\N}{\mathbb N}
\newcommand{\C}{\mathbb C}
\newcommand{\T}{\mathbb T}

\newcommand{\supp}{\mathrm{supp}}
\renewcommand{\span}{\mathrm{span}}
\newcommand{\dist}{\mathrm{dist}}

\title[$B(H)$ is not a twisted groupoid $C^*$-algebra]{$B(H)$ is not a twisted groupoid $C^*$-algebra}

\author[Alcides Buss]{Alcides Buss}
\address[Alcides Buss]{Departamento de Matem\'atica, Universidade Federal de Santa Catarina, 88.040-900
Florian\'opolis-SC, Brazil}
	\email{alcides.buss@ufsc.br}
	\urladdr{http://mtm.ufsc.br/~alcides/}

\author[Luiz F. Garcia]{Luiz Felipe Garcia}
\address[Luiz F. Garcia]{Departamento de Matem\'atica, Universidade Federal de Santa Catarina, 88.040-900
Florian\'opolis-SC, Brazil}
	\email{lfgarcia98@gmail.com}

\author[Tomás Pacheco]{Tomás Pacheco}
\address[Tomás Pacheco]{Center for Mathematical Analysis, Geometry and Dynamical Systems,
Department of Mathematics, Instituto Superior T\'ecnico, University of Lisbon,
Av. Rovisco Pais 1, 1049-001 Lisboa, Portugal}
	\email{tomas.pacheco@tecnico.ulisboa.pt}
\keywords{Twisted groupoid $C^*$-algebras, conditional expectations, atomic von Neumann algebras, étale groupoids}
\subjclass[2020]{Primary 46L05, 22A22; Secondary 46L10, 43A65}
\begin{document}
\maketitle

\begin{abstract}
We show that $B(H)$ for an infinite dimensional Hilbert space $H$ cannot be realized as the reduced twisted $C^*$-algebra of any locally compact Hausdorff étale groupoid.

The proof is based on the canonical conditional expectation $$C_r^*(G,\Sigma)\to C_0(G^{(0)})$$ and a structural analysis of the resulting diagonal subalgebra inside $B(H)$. We show that this diagonal must be an atomic abelian von Neumann algebra, and then exclude both possibilities for its spectrum.

If the unit space is finite, one obtains a tracial state on $C_r^*(G,\Sigma)$, which is impossible for $B(H)$. If it is infinite, the groupoid structure forces a block-sparsity phenomenon for compactly supported sections, which is incompatible with $B(H)$.

This provides the first examples of $C^*$-algebras that cannot be realized as reduced twisted étale groupoid $C^*$-algebras.

\end{abstract}

\section{Introduction}

Étale groupoids and their $C^*$-algebras provide a powerful framework for
encoding dynamical systems, inverse semigroup actions, Cartan pairs, and many
other constructions in operator algebras; see for instance
\cite{Renault1980,Kumjian1986,Renault2008}.

A fundamental problem is to understand which $C^*$-algebras arise as
groupoid $C^*$-algebras. In \cite{BussSims} it was shown that every groupoid $C^*$-algebra is isomorphic to its opposite algebra, yielding obstructions in the untwisted setting. However, this argument does not extend to twisted groupoid $C^*$-algebras, which need not be self-opposite. It remained open whether every $C^*$-algebra is isomorphic to a twisted groupoid $C^*$-algebra.
This question appears implicitly in \cite{BussSims} and is explicitly raised in the literature, e.g. in \cite{ClarkOceallaighPham2025}, as well as in recent MathOverflow discussions \cite{garciaMO, pkoaMO}.
To the best of our knowledge, no example of a $C^*$-algebra failing to admit such a realization was previously known. 

The goal of this paper is to provide the first such examples.

\begin{theorem}\label{thm:main}
Let $H$ be an infinite-dimensional Hilbert space.
Then there is no locally compact Hausdorff étale groupoid $G$ and no twist
$\Sigma$ over $G$ such that
\[
B(H)\cong C_r^*(G,\Sigma).
\]
\end{theorem}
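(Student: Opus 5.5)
Suppose, for a contradiction, that $\pi\colon C_r^*(G,\Sigma)\to B(H)$ is an isomorphism. Set $X=G^{(0)}$, $D=\pi(C_0(X))\subseteq B(H)$, and let $E\colon B(H)\to D$ be the faithful conditional expectation obtained from the canonical one $C_r^*(G,\Sigma)\to C_0(X)$. The plan has three steps: show that $D$ has a very rigid form, then rule out $X$ finite, then rule out $X$ infinite.

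\textbf{Step 1: structure of $D$.} Since $B(H)$ is unital, $C_0(X)$ is unital, so $X$ is compact and $D$ is a unital abelian $C^*$-subalgebra with a faithful conditional expectation $E$.
\begin{enumerate}[label=(\roman*)]
\item I would first show that $D$ is an atomic abelian von Neumann algebra, i.e.\ $D\cong\ell^\infty(I)$ via a family of pairwise orthogonal projections $p_i$ with $\sum p_i=1$.
\item Compose $E$ with the characters of $D$ to get states on $B(H)$ that are well behaved on $D'\supseteq D$.
\item Use faithfulness of $E$ on positive compact operators: for a rank-one projection $q$, $E(q)\ne0$. Expanding $E(q)$ against $D$ should force $D$ to contain minimal projections whose supremum is $1$.
\item Use that $E$ is a $D$-bimodule map with $E(x)^*E(x)\le E(x^*x)$, together with the fact that $B(H)$ is generated as a von Neumann algebra by its compact operators, to get weak-closedness of $D$.
\end{enumerate}
Once $D\cong\ell^\infty(I)$, its spectrum $X=\beta I$ is finite exactly when $I$ is finite, so it remains to exclude $X$ finite and $X$ infinite.

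\textbf{Step 2: $X$ finite.} Here $C_0(X)\cong\C^n$. Choose the uniform probability measure $\mu$ on $X$; it is invariant under every bisection because $G$ is étale and $X$ is discrete (alternatively, pick a minimal invariant subset). Then $\mu\circ E$ is a tracial state on $C_r^*(G,\Sigma)$: on $C_c(G,\Sigma)$ the trace identity reduces to $\sum_x\sum_{g\in G_x} f_1(g^{-1})f_2(g)$, which is symmetric in $f_1,f_2$. But $B(H)$ has no tracial state when $\dim H=\infty$, a contradiction.

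\textbf{Step 3: $X$ infinite.} Now $I$ is infinite and $X=\beta I$. Here I would exploit sparsity. A section $f\in C_c(G,\Sigma)$ is supported on finitely many open bisections $U_1,\dots,U_k$, and each bisection induces a partial homeomorphism of $X$. By Stone duality, a bisection homeomorphism between clopen subsets of $\beta I$ restricts to the isolated points $I$ and gives a partial bijection of $I$. So for each $i$, the compression $p_j\pi(f)p_i$ is nonzero for at most $k$ indices $j$, and symmetrically in $i$: the operator is a finite sum of ``partial permutation'' blocks relative to $\{p_i\}$.

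Such operators are dense in $B(H)$ in norm. Approximate an operator $T$ in which every block row has norm bounded below on infinitely many blocks:
\begin{enumerate}[label=(\alph*)]
\item if some $p_i$ has infinite rank, use an isometry spreading $p_iH$ across infinitely many $p_j$;
\item otherwise, take a unitary (or isometry) mixing each block $p_i$ with $N$ others uniformly, e.g.\ built from Fourier matrices, for each $N$.
\end{enumerate}
A norm estimate then shows that an operator with at most $k$ nonzero blocks per row is at distance at least some $c>0$ from $T$ when $N\gg k$; a direct sum over all $N$ gives a single $T$ at uniform positive distance from every $\pi(f)$, contradicting density.

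\textbf{Main obstacles.} The hardest step is the first part of Step 1, atomicity of $D$. The tension is that abstract $C^*$-diagonals with faithful expectations in $B(H)$ need not be von Neumann algebras, e.g.\ $C(\T)$ is not excluded a priori. I would attack it through the compact operators: the nonzero ideal $K(H)\cap\ker$ considerations of $E$, and the fact that $E$ restricted to $K(H)$ maps into $D\cap K(H)$ or at least has controlled range. The quantitative estimate in Step 3 is the second delicate point.
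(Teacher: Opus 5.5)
The gap is Step 1, which is where the real work lies. You announce that $D$ is an atomic abelian von Neumann algebra but offer only heuristics, and you concede that you cannot rule out a diagonal like $C(\T)$. Everything afterwards depends on this. Without it there are no minimal projections $p_i$ with $\sum_i p_i=1$, no block decomposition of $H$, and no dichotomy between $X$ finite and $X=\beta I$. The tools you name in (iv), namely Kadison--Schwarz and the compacts generating $B(H)$, do not give weak closure. The missing idea is order-theoretic. Let $(a_i)$ be a bounded increasing net in $D_{\mathrm{sa}}$ with supremum $a\in B(H)$. Then $a_i=E(a_i)\le E(a)$ for all $i$, so $a\le E(a)$. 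Since $E(E(a)-a)=0$ and $E(a)-a\ge 0$, faithfulness gives $a=E(a)\in D$. Thus $D$ is monotone closed in $B(H)$. By Kadison's characterisation it is then a von Neumann subalgebra, because normal states of $B(H)$ restrict to a separating family of normal states on $D$. This is precisely what excludes $C(\T)$.

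Your idea (iii) for atomicity points the right way but lacks the mechanism, which is a pinching estimate. Write $D=D_a\oplus D_d$ and let $p$ be the unit of the diffuse part. If $p\neq 0$, take a unit vector $\xi\in pH$ and $q=|\xi\rangle\langle\xi|$. Since $a\mapsto\langle a\xi,\xi\rangle$ is normal on the diffuse algebra $Dp$, there is a partition $p=e_1+\dots+e_n$ in $Dp$ with $\|e_k\xi\|^2<\varepsilon$. As $q=pqp$, bimodularity and commutativity give $E(q)=pE(q)=\sum_k e_kE(q)e_k=E(\sum_k e_kqe_k)$. Meanwhile $\|\sum_k e_kqe_k\|=\max_k\|e_k\xi\|^2<\varepsilon$, so $E(q)=0$, contradicting faithfulness.

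With Step 1 supplied this way, your Steps 2 and 3 are correct and essentially the paper's. In Step 2, counting measure composed with $E$ gives a tracial state. Your computation with $\T$-equivariant sections handles the twist in place of the paper's identity $\sigma(\gamma,\gamma^{-1})=\sigma(\gamma^{-1},\gamma)$. In Step 3, the per-bisection fibre count plus a spreading operator is the paper's block-sparsity argument. Your case split (a)/(b) is unnecessary. Choose one unit vector $\eta_i\in p_iH$ per block and disjoint $X_r$ with $|X_r|=r$, and send $\eta_{j_r}$ to $r^{-1/2}\sum_{i\in X_r}\eta_i$. This operator is at distance at least $\sqrt{1-k/r}$ from every operator with at most $k$ nonzero blocks per column, whatever the ranks of the $p_i$.
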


In contrast, if $\dim(H)=n<\infty$, then $B(H)$ is isomorphic to the groupoid
$C^*$-algebra of the finite pair groupoid on $\{1,\dots,n\}$.
Moreover, if $H=\ell^2(X)$ for an arbitrary set $X$, then $B(H)$ arises as the
von Neumann algebra of the measured pair groupoid $X\times X$ equipped with
counting measure. Thus the obstruction we obtain is genuinely $C^*$-algebraic
and topological, and does not appear at the von Neumann level.

It is also worth noting that $B(H)$ is always self-opposite. Therefore, our
result provides the first examples of $C^*$-algebras that are self-opposite but
cannot be realized as (twisted) étale groupoid $C^*$-algebras.

The proof is based on the canonical diagonal subalgebra
\(
A=C_0(G^{(0)})\subseteq C_r^*(G,\Sigma)
\)
and the faithful conditional expectation
\(
E:C_r^*(G,\Sigma)\to C_0(G^{(0)}).
\)
Transporting this structure through an isomorphism
\[
C_r^*(G,\Sigma)\cong B(H),
\]
we are led to study commutative $C^*$-subalgebras
\(
A\subseteq B(H)
\)
admitting a faithful conditional expectation.

The argument proceeds in three steps:

\begin{enumerate}[label=\rm(\arabic*)]
\item show that $A$ is a von Neumann algebra;
\item show that $A$ is atomic;
\item exclude the finite and infinite atomic cases.
\end{enumerate}

The first step is a general operator-algebraic consequence of the existence of a
faithful conditional expectation. The second step provides a strong structural
restriction on the diagonal. The final step uses the groupoid origin of $A$:

\begin{itemize}
\item if $G^{(0)}$ is finite, then $C_r^*(G,\Sigma)$ admits a tracial state,
      contradicting $B(H)$;
\item if $G^{(0)}$ is infinite and discrete, then compactly supported sections
      give rise to operators with uniformly bounded propagation between the
      summands of $H$, yielding a block-sparsity property incompatible with
      $B(H)$.
\end{itemize}

We emphasize that the examples obtained here are necessarily non-separable as
$C^*$-algebras (even when $H$ is separable). Nevertheless, they provide the first
evidence that large classes of $C^*$-algebras -- particularly infinite von Neumann
algebras -- may fail to admit realizations as twisted étale groupoid $C^*$-algebras.

After the first version of this paper was announced, we were informed that David Gao had independently been working on this problem, following a discussion on MathOverflow \cite{pkoaMO}. In a private communication, he shared a preliminary, unpublished manuscript containing a similar, but different proof of our main theorem. His approach to the atomicity (our Theorem~\ref{thm:atomic}) of the diagonal relies on the normal-singular decomposition of the conditional expectation \cite[Sections~III.2 and III.3]{TakesakiI}. In contrast, our approach is entirely self-contained and explicitly uses the block-sparsity of compactly supported sections (see Section~\ref{sec:inf-atomic-case}).

\subsection*{Acknowledgements}
The first author was supported by CNPq and Fapesc. The second author was supported by CAPES. The third author was funded by FCT/Portugal and the Recovery and Resilience Plan (PRR) through projects UID/04459/2025 and UID/PRR/04459/2025.

The authors are grateful to Ruy Exel for helpful and fruitful discussions.

\section{Faithful expectations and von Neumann subalgebra}

We begin with a general observation, which we will apply to the case $M=B(H)$.

\begin{proposition}\label{prop:A-vN}
Let $M$ be a von Neumann algebra and let $A\subseteq M$ be a commutative
unital $C^*$-subalgebra. Suppose there exists a faithful conditional
expectation $E:M\to A$. Then $A$ is a von Neumann subalgebra of $M$.
\end{proposition}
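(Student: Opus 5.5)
The plan is to show that $A$ is \emph{monotone closed} in $M$, i.e.\ closed under strong limits of bounded increasing nets, and then pass from monotone closedness to weak closedness. Throughout I read ``unital'' as $1_M\in A$, so $A$ acts nondegenerately wherever $M$ does. The only properties of $E$ needed are positivity, $A$-bimodularity (in particular $E|_A=\mathrm{id}_A$), and faithfulness.

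\emph{Step 1: monotone closedness.} Let $(a_i)$ be a bounded increasing net of self-adjoint elements of $A$. Its strong limit $a=\sup_i a_i$ exists in $M$, since $M$ is a von Neumann algebra. For each $i$ we have $a-a_i\ge 0$, so positivity of $E$ gives
\[
E(a)\ge E(a_i)=a_i .
\]
Hence $E(a)$ is an upper bound of the net. The positive cone is strongly closed, so $E(a)\ge \sup_i a_i = a$. Now $E(a)-a\ge 0$, and
\[
E\bigl(E(a)-a\bigr)=E(a)-E(a)=0 .
\]
Faithfulness of $E$ forces $E(a)=a$, so $a\in A$. Applying the same argument to $-a_i$ handles decreasing nets. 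Note that this step does not use commutativity; it is the heart of the argument and is short.

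\emph{Step 2: from monotone closed to von Neumann algebra.} This is the step I expect to be the main obstacle, since a monotone closed $C^*$-algebra is not obviously weakly closed. My first choice is to invoke Kadison's theorem: a nondegenerate $C^*$-algebra of operators that is closed under strong limits of bounded monotone nets of self-adjoint elements is a von Neumann algebra. Equivalently, one can use Pedersen's up--down theorem, which says that the monotone closure of $A_{sa}$ in the strong topology is $(A'')_{sa}$; Step 1 then gives $A''_{sa}\subseteq A$, hence $A=A''$.

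If a more self-contained route is wanted, I would use commutativity as follows. First, $A$ contains the spectral projections $\chi_{(t,\infty)}(a)$ of each self-adjoint $a\in A$: these are increasing strong limits of $f_n(a)$, where $f_n\uparrow\chi_{(t,\infty)}$ are continuous. Next, $A$ contains arbitrary suprema of families of its projections. Finite suprema are given by $p\vee q=p+q-pq$ in the commutative setting, and an arbitrary supremum is the limit of the increasing net of finite suprema. Finally, I would try to show that every projection of the abelian von Neumann algebra $A''$ is obtained from projections of $A$ by such lattice operations together with complements. That would give the result, since $A''$ is the norm-closed span of its projections.

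This last reduction is exactly where Pedersen's up--down--up iteration is needed in general. So in the write-up I would simply cite Kadison's/Pedersen's theorem for Step 2 rather than reprove it.
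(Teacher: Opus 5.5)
Your proof is correct and follows the paper's argument. Step 1 is exactly the paper's monotone-closure argument via positivity and faithfulness of $E$, and Step 2 likewise rests on Kadison's theorem. The only difference is the form of that theorem: you use the concrete version (a unital, strongly monotone-closed operator algebra equals its bicommutant, e.g.\ via Pedersen's up--down theorem). The paper instead uses the abstract version (monotone complete plus a separating family of normal states, obtained by restricting normal states of $M$) and then notes that the inclusion $A\hookrightarrow M$ is normal.
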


\begin{proof}
Let $(a_i)$ be an increasing bounded net of self-adjoint elements of $A$.
Since $M$ is a von Neumann algebra, the supremum $a=\sup_i a_i$ exists in $M$.

For each $i$ we have $a_i\le a$, hence applying $E$ gives
$a_i=E(a_i)\le E(a)$. Taking suprema yields $a\le E(a)$.
Since $E(E(a)-a)=0$ and $E$ is faithful, $E(a)=a\in A$.

Hence $A$ is monotone complete. Moreover, if $\omega$ is any normal state on
$M$, then its restriction to $A$ is normal, since suprema of increasing nets
in $A$ agree with those in $M$. As normal states on $M$ separate points of
$M_+$, their restrictions separate points of $A_+$. By Kadison's theorem
\cite{Kadison1985} (see also \cite[Theorem~III.3.16]{TakesakiI}), it follows
that $A$ is a von Neumann algebra.

Finally, the inclusion $A\hookrightarrow M$ preserves suprema of bounded
increasing nets of self-adjoint elements, hence is normal. Therefore $A$ is a
von Neumann subalgebra of $M$.
\end{proof}

\section{Faithful expectations and atomicity}

We first record a standard fact about diffuse abelian von Neumann algebras.

\begin{proposition}\label{prop:diffuse-partition}
Let $D$ be a diffuse abelian von Neumann algebra, and let $\phi$ be a normal
positive functional on $D$. Then for every $\varepsilon>0$, there is a finite
partition of unity by projections
\[
1=e_1+\cdots+e_n
\]
such that $\phi(e_k)<\varepsilon$ for all $k$.
\end{proposition}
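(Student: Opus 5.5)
The plan is to prove a halving lemma first, then iterate it, and finally absorb the error by a normality argument.

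The halving lemma is this: for any nonzero projection $p\in D$ there is a subprojection $q\le p$ in $D$ with $\phi(q)$ and $\phi(p-q)$ both at most $\tfrac{2}{3}\phi(p)$. Iterating it $m$ times produces a partition of unity into $2^m$ projections, each of $\phi$-mass at most $(2/3)^m\phi(1)$. Choosing $m$ with $(2/3)^m\phi(1)<\varepsilon$ finishes the proof.

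To get the halving lemma, I would prove something slightly stronger: for any projection $p\in D$ and any $t\in[0,\phi(p)]$ there is $q\le p$ with $\phi(q)=t$ (or at least $\phi(q)$ arbitrarily close to $t$). This is the standard Lyapunov-type argument. It combines two ingredients:

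\begin{itemize}
\item Diffuseness says every nonzero projection $r$ has a nonzero proper subprojection.
\item An exhaustion argument then gives a subprojection of small positive mass when $\phi(r)>0$.
\end{itemize}

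The step that is not immediate is producing subprojections of $r$ with arbitrarily small $\phi$-value. Splitting $r=r_1+(r-r_1)$ gives one piece of mass at most $\phi(r)/2$, but that piece could have $\phi$-value $0$, which is useless. So I would argue as follows. Let $s$ be the support of $\phi$, so $\phi$ is faithful on $sD$ and $\phi(1-s)=0$; put $1-s$ into any block, since it has mass $0$. On $sD$ the functional is faithful, so every nonzero subprojection has positive mass. Repeatedly halving (taking the smaller piece each time) then gives nonzero subprojections of $r\le s$ with mass at most $2^{-k}\phi(r)$, hence arbitrarily small and positive.

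With small pieces available, I would fix $p\le s$ and $0<\delta<\varepsilon$, where $\delta$ is an arbitrary small positive number to be chosen at the end (only $\delta<\varepsilon$ is needed). Choose by Zorn's lemma a maximal orthogonal family $\{q_j\}$ of nonzero subprojections of $p$, each with $\phi(q_j)<\delta$, whose total sum $q$ satisfies $\phi(q)\le\phi(p)/2$. Normality of $\phi$ gives $\phi(q)=\sum_j\phi(q_j)$, and this is what makes the Zorn step legitimate: the upper bound along a chain is the supremum of the sums, and its mass is the limit of the masses.

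Maximality then forces $\phi(q)>\phi(p)/2-\delta$. Otherwise $p-q$ is nonzero with positive mass, so it contains a nonzero piece of mass less than $\min(\delta,\phi(p)/2-\phi(q))$. Adding that piece keeps the total at most $\phi(p)/2$, which contradicts maximality.

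Hence $\phi(p)/2-\delta<\phi(q)\le\phi(p)/2$. This gives the halving lemma up to $\delta$ (each of $q$ and $p-q$ has mass at most $\phi(p)/2+\delta$), which is enough for the iteration. Alternatively, one can skip the iteration entirely. Apply the Zorn argument once, directly to the family of all subprojections of $s$ of mass less than $\varepsilon$, collecting them into a maximal orthogonal family. Normality shows the family sums to $s$, since any nonzero leftover would contain a small piece that could be added. Countably many of them have positive mass, and their masses sum to $\phi(1)$. Group these greedily into finitely many blocks of mass less than $\varepsilon$ plus a tail of mass less than $\varepsilon$, which works because a convergent series has small tails. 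Put the null members and $1-s$ into the tail block. Each block sum is a projection in $D$ by strong convergence.

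I expect the main obstacle to be exactly this grouping and sup bookkeeping. The key point is to use normality (complete additivity) of $\phi$ to pass from orthogonal families to their sums. Diffuseness enters only through the existence of small positive-mass subprojections.
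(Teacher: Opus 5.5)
Your argument is correct, but it takes a different route from the paper.

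\textbf{The paper's route.} The paper passes to a measure model. It writes $D\cong L^\infty(X,\mu)$ with $\mu$ atomless and represents $\phi$ by a density $g\in L^1(X,\mu)_+$. It then notes that $d\nu=g\,d\mu$ is a finite atomless measure and quotes the measure-theoretic fact \cite[211Y(c)]{Fremlin2} that such a measure admits a finite measurable partition into sets of measure $<\varepsilon$.

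\textbf{Your route.} You prove that fact directly in the projection lattice of $D$:
\begin{enumerate}
\item The support projection $s$ makes $\phi$ faithful on $sD$.
\item Diffuseness plus faithfulness yields nonzero subprojections of arbitrarily small positive mass. Your repeated splitting here uses only diffuseness, so there is no circularity with the halving lemma.
\item A maximal orthogonal family of nonzero subprojections of $s$ with mass $<\varepsilon$ must sum to $s$.
\item Complete additivity of the normal functional $\phi$ makes the family countable. It also lets you cut off a tail of mass $<\varepsilon$, into which you put $1-s$.
\end{enumerate}

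\textbf{On the halving version.} The one-shot version is the cleanest and makes the halving lemma redundant. If you keep the halving route, one parenthetical is off. With only the approximate bound $\phi(p)/2+\delta$, ``only $\delta<\varepsilon$ is needed'' is not enough, because the errors accumulate to roughly $2\delta$ over the iterations. To get the factor $2/3$ you would need, for instance, $\delta\le\phi(p)/6$. In fact your own maximality argument does better. Adding a piece of mass $<\min(\delta,\phi(p)/2-\phi(q))$ whenever $\phi(q)<\phi(p)/2$ forces exact halving, $\phi(q)=\phi(p)/2$.

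\textbf{What each approach buys.} The paper's proof is very short but leans on several facts:
\begin{itemize}
\item the structure theorem for abelian von Neumann algebras, which for non-$\sigma$-finite $D$ needs a localizable measure space;
\item the identification of normal functionals with $L^1$ densities;
\item the translation of ``diffuse'' into ``atomless'';
\item an external citation.
\end{itemize}
Yours uses only the support projection, complete additivity of normal functionals on orthogonal families, and Zorn's lemma. It is therefore self-contained and applies verbatim to arbitrary $D$. It is essentially a proof of the cited measure-theoretic lemma carried out inside $D$.
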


\begin{proof}
By \cite[Theorem~III.1.18]{TakesakiI}, there is a measure space $(X,\mu)$ such
that $D\cong L^\infty(X,\mu)$. Since $D$ is diffuse, $\mu$ is atomless. Under
this identification, $\phi$ is given by integration against some
$g\in L^1(X,\mu)_+$. Thus the finite measure $\nu$ defined by $d\nu=g\,d\mu$
is atomless. By \cite[211Y(c)]{Fremlin2}, there is a finite measurable partition
$X=E_1\sqcup\cdots\sqcup E_n$ such that $\nu(E_k)<\varepsilon$ for all $k$.
The projections $e_k:=\chi_{E_k}$ have the required properties.
\end{proof}

We now prove the crucial structural theorem.

\begin{theorem}\label{thm:atomic}
Let $A\subseteq B(H)$ be a commutative von Neumann algebra and suppose there
exists a faithful conditional expectation $E:B(H)\to A$. Then $A$ is atomic.
\end{theorem}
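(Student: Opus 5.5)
The plan is to argue by contradiction: isolate the diffuse part of $A$ and show that $E$ must kill some rank-one projection living under it. Every abelian von Neumann algebra splits as an atomic part plus a diffuse part. Concretely, let $p_0$ be the supremum in $A$ of all minimal projections of $A$, and set $p:=1-p_0\in A$. Then $pA$ has no minimal projections, i.e.\ $pA$ is a diffuse abelian von Neumann algebra acting on $pH$. The algebra $A$ is atomic exactly when $p=0$, so I will assume $p\neq 0$ and derive a contradiction.

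Pick a unit vector $\xi\in pH$ and let $q$ be the rank-one projection onto $\C\xi$, so that $q=pqp\neq 0$. Since $E$ is faithful, $E(q)$ is a nonzero positive element of $A$. By $A$-bimodularity, $E(q)=E(pqp)=pE(q)p$.

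Fix $\varepsilon>0$ and let $\phi(a):=\langle a\xi,\xi\rangle$ for $a\in pA$. Since $A$ is a von Neumann subalgebra of $B(H)$, $\phi$ is a normal positive functional on the diffuse algebra $pA$. Proposition~\ref{prop:diffuse-partition} then gives projections $e_1,\dots,e_n\in pA$ with $e_1+\cdots+e_n=p$ and $\|e_k\xi\|^2=\phi(e_k)<\varepsilon$ for every $k$.

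Each $e_kqe_k$ is the rank-one operator $\eta\mapsto\langle\eta,e_k\xi\rangle e_k\xi$. Its norm is $\|e_k\xi\|^2<\varepsilon$, and it is supported under $e_k$, so $0\le e_kqe_k\le\varepsilon e_k$. Applying the positive map $E$, and using bimodularity together with the commutativity of $A$, we get
\[
e_kE(q)=e_kE(q)e_k=E(e_kqe_k)\le \varepsilon e_k .
\]
Summing over $k$ gives $E(q)=pE(q)=\sum_k e_kE(q)\le\varepsilon p$. Since $\varepsilon>0$ was arbitrary, $E(q)=0$, which contradicts faithfulness. Hence $p=0$ and $A$ is atomic.

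The main point needing care is the first step: the decomposition of $A$ into atomic and diffuse parts, and the fact that $pA$ really is diffuse. This is standard, since any minimal projection of $pA$ would be a minimal projection of $A$ orthogonal to $p_0$. The step that actually uses $B(H)$ is the existence of a nonzero rank-one (hence "small") projection under $p$, which $E$ must be able to see.
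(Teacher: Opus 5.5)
Your proof is correct and follows essentially the same route as the paper: isolate the diffuse summand $p$, apply Proposition~\ref{prop:diffuse-partition} to the vector functional of a unit vector $\xi\in pH$, and use bimodularity to write $E(q)=\sum_k E(e_kqe_k)$, which forces $E(q)=0$. The differences are cosmetic. You work with $E$ directly rather than through the compressed expectation $E_p$ on $B(pH)$. You also bound $E(q)\le\varepsilon p$ via the order inequalities $e_kqe_k\le\varepsilon e_k$, where the paper uses the norm estimate $\|\sum_k e_kqe_k\|<\varepsilon$.
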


\begin{proof}
Write $A=A_a\oplus A_d$, where $A_a$ is atomic and $A_d$ is diffuse.
Let $p\in A$ be the central projection with $A_d=Ap$. We show that $p=0$.

Assume $p\neq 0$. Then $A_d$ is a nonzero diffuse commutative von Neumann algebra.
Set $H_p:=pH$. Since $A_d=Ap$, it acts nondegenerately on $H_p$, and we define
$E_p:B(H_p)\to A_d$ by $E_p(x):=E(pxp)$. This is a faithful conditional expectation.

Indeed, positivity and $A_d$-bimodularity are immediate. If $x\in B(H_p)$ is
positive and $E_p(x)=0$, then, viewing $x$ as an operator on $H$ with support in
$p$, we have $x=pxp$ and hence
\[
E(x)=E(pxp)=E_p(x)=0.
\]
Since $E$ is faithful, it follows that $x=0$.

Choose a unit vector $\xi\in H_p$ and consider the normal positive functional
\[
\phi:A_d\to\C,\qquad \phi(a):=\langle a\xi,\xi\rangle.
\]
Fix $\varepsilon>0$. Applying Proposition~\ref{prop:diffuse-partition} to $A_d$,
whose unit is $p$, gives pairwise orthogonal projections $e_1,\dots,e_n\in A_d$
such that
\[
e_1+\cdots+e_n=p
\qquad\text{and}\qquad
\|e_k\xi\|^2=\phi(e_k)<\varepsilon
\quad (k=1,\dots,n).
\]
Let $q:=|\xi\rangle\langle\xi|$ and set
\[
x_\varepsilon:=\sum_{k=1}^n e_k q e_k.
\]
Using the $A_d$-bimodularity of $E_p$ and the commutativity of $A_d$, we obtain
\[
E_p(x_\varepsilon)=\sum_k e_kE_p(q)e_k
=\sum_k e_kE_p(q)=pE_p(q)=E_p(q).
\]
For each $k$ we have
\(
e_kqe_k=|e_k\xi\rangle\langle e_k\xi|,
\)
hence
\(
\|e_kqe_k\|=\|e_k\xi\|^2<\varepsilon.
\)
Since the ranges of the operators $e_kqe_k$ are pairwise orthogonal,
\[
\|x_\varepsilon\|=\max_k \|e_kqe_k\|<\varepsilon.
\]
Therefore
\[
\|E_p(q)\|=\|E_p(x_\varepsilon)\|\le \|x_\varepsilon\|<\varepsilon.
\]
As $\varepsilon>0$ was arbitrary, it follows that $E_p(q)=0$. By faithfulness of
$E_p$, we get $q=0$, a contradiction. Hence $p=0$.
\end{proof}

\begin{corollary}\label{cor:A-ellinftyX}
Let $A\subseteq B(H)$ be a commutative unital $C^*$-subalgebra admitting a
faithful conditional expectation
\(
E:B(H)\to A.
\)
Then
\[
A\cong \ell^\infty(X)
\]
for some index set $X$ with $|X|\le \dim(H)$.
\end{corollary}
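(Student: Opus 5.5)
The plan is to chain Proposition~\ref{prop:A-vN} and Theorem~\ref{thm:atomic}, and then read off the structure of an atomic abelian von Neumann algebra.

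First, apply Proposition~\ref{prop:A-vN} with $M=B(H)$. Its hypotheses are exactly ours: $A$ is a commutative unital $C^*$-subalgebra and $E$ is faithful. It gives that $A$ is a von Neumann subalgebra of $B(H)$. Here I read "unital" as containing $1_H$, which is the setting of that proposition. If $A$ only had a unit $e\neq 1_H$, one could instead work inside $eB(H)e=B(eH)$ with the restricted expectation, which stays faithful. Theorem~\ref{thm:atomic} then applies and shows that $A$ is atomic.

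Second, identify $A$ concretely. Let $\{p_x\}_{x\in X}$ be the family of minimal projections of $A$. Atomicity means every nonzero projection in $A$ dominates some $p_x$. Since $A$ is commutative, distinct minimal projections $p_x\neq p_y$ satisfy $p_xp_y=0$, because $p_xp_y$ is a subprojection of both. Atomicity also gives $\sum_x p_x=1$: otherwise $1-\sum_x p_x$ would be a nonzero projection in $A$ containing no minimal projection. By minimality and commutativity, $p_xAp_x=\C p_x$, so each $a\in A$ can be written $a=\sum_x \lambda_x(a)p_x$ in the strong operator topology, with $|\lambda_x(a)|\le\|a\|$. The map $a\mapsto(\lambda_x(a))_x$ is then an injective $*$-homomorphism $A\to\ell^\infty(X)$. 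It is surjective because for any bounded family $(\lambda_x)$ the strong sum $\sum_x\lambda_xp_x$ converges and lies in $A$, since $A$ is strongly closed. This gives $A\cong\ell^\infty(X)$.

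Third, bound the cardinality. For each $x$ choose a unit vector $\xi_x\in p_xH$. These vectors are pairwise orthogonal, so they extend to an orthonormal basis of $H$, and hence $|X|\le\dim(H)$.

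All of these steps are routine. The only real content is in the two earlier results, so the main point needing care is the unital convention in the first step.
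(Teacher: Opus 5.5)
Your proposal is correct and follows the paper's proof. Proposition~\ref{prop:A-vN} and Theorem~\ref{thm:atomic} make $A$ an atomic abelian von Neumann algebra, hence $A\cong\ell^\infty(X)$ (you verify this by hand where the paper cites Takesaki), and the bound $|X|\le\dim(H)$ comes from unit vectors in the mutually orthogonal subspaces $p_xH$ exactly as in the paper; your caveat about a unit $e\neq 1_H$ is harmless but vacuous, since bimodularity gives $E(1-e)=E(e(1-e)e)=0$, so faithfulness of $E$ already forces $1_H\in A$.
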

\begin{proof}
By Proposition~\ref{prop:A-vN}, $A$ is a commutative von Neumann algebra.
By Theorem~\ref{thm:atomic}, it is atomic.
Hence $A$ is isomorphic to $\ell^\infty(X)$ for some index set $X$ (see \cite[Proposition~III.1.19]{TakesakiI} and the subsequent discussion).

Let $(p_x)_{x\in X}$ be the minimal projections of $A$ corresponding to the
coordinate functions in $\ell^\infty(X)$. Since each $p_x\neq 0$, the subspace
$p_xH$ is nonzero. Moreover, for $x\neq y$ we have $p_xp_y=0$, so the
subspaces $p_xH$ and $p_yH$ are orthogonal.
For each $x\in X$, choose a unit vector $\xi_x\in p_xH$. Then
$(\xi_x)_{x\in X}$ is an orthonormal family in $H$. Therefore
\(
|X|\le \dim(H).
\)
\end{proof}

\begin{remark}
For effective groupoids and masas on separable Hilbert spaces, that is, for Cartan $C^*$-subalgebras $A\subset B(H)$ with $H$ separable, the argument can be simplified.
Indeed, if $H$ is separable and $G$ is effective, then $C_0(G^{(0)})$
is a Cartan subalgebra of $C_r^*(G,\Sigma)$ by Renault's theorem
\cite{Renault2008}, extended to the non-separable setting by Raad
\cite{Raad22}. In particular, the canonical conditional expectation
onto $C_0(G^{(0)})$ is unique.

If $C_r^*(G,\Sigma)\cong B(H)$, the diagonal $A$ becomes a masa in $B(H)$.
Since $H$ is separable, $A$ is singly generated as a von Neumann algebra by \cite[Proposition~III.1.21]{TakesakiI}.
Thus, by results of Akemann and Sherman \cite{Akemann-Sherman-CE}, the
conditional expectation onto $A$ is normal and $A$ is atomic.
\end{remark}

\subsection{Finite unit space and traces}

Assume that $G$ is étale and $G^{(0)}$ is finite. Then $G$ is discrete.
In this case every twist $\Sigma$ is topologically trivial and is determined
by a $2$-cocycle $\sigma:G^{(2)}\to\T$, which we may assume to be normalized,
that is, $\sigma(r(\gamma),\gamma)=\sigma(\gamma,s(\gamma))=1$ for all $\gamma\in G$.
Applying the cocycle identity to $(\gamma,\gamma^{-1},\gamma)$ yields
\begin{equation}\label{eq:cocycle-symmetry}
\sigma(\gamma,\gamma^{-1})=\sigma(\gamma^{-1},\gamma).
\end{equation}

Define $\tau:C_c(G,\sigma)\to\C$ by
\[
\tau(f)=\frac{1}{|G^{(0)}|}\sum_{u\in G^{(0)}} f(u).
\]
Equivalently, $\tau=\mu\circ E$, where
\[
E:C_r^*(G,\Sigma)\to C_0(G^{(0)})
\]
is the canonical conditional expectation and $\mu$ is the normalized counting measure.

\begin{proposition}\label{prop:trace-finite-units}
If $G$ is discrete and $G^{(0)}$ is finite, then $\tau$ extends to a tracial
state on $C_r^*(G,\Sigma)$.
\end{proposition}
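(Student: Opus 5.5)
The plan is to verify three things for $\tau$: positivity, continuity, and the trace property on the dense $*$-subalgebra $C_c(G,\sigma)$. Continuity and extension then follow from $\tau=\mu\circ E$.

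\emph{Extension and positivity.} The canonical conditional expectation $E:C_r^*(G,\Sigma)\to C_0(G^{(0)})$ is a contractive, completely positive map that restricts elements of $C_c(G,\sigma)$ to $G^{(0)}$. The normalized counting measure $\mu$ is a state on $C(G^{(0)})=\C^{G^{(0)}}$. Hence $\mu\circ E$ is a state on $C_r^*(G,\Sigma)$ (using $E(1)=1$, since $G^{(0)}$ is finite and the algebra is unital), and it agrees with $\tau$ on $C_c(G,\sigma)$. So $\tau$ extends to a state, and it only remains to check the trace property $\tau(f*g)=\tau(g*f)$. By continuity and density it suffices to check this on $C_c(G,\sigma)$, and by bilinearity on point masses $f=\delta_\alpha$, $g=\delta_\beta$.

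\emph{Trace identity on point masses.} I will use the twisted convolution $(f*g)(\gamma)=\sum_{\alpha\beta=\gamma} f(\alpha)g(\beta)\sigma(\alpha,\beta)$, so that $\delta_\alpha*\delta_\beta=\sigma(\alpha,\beta)\delta_{\alpha\beta}$ when $s(\alpha)=r(\beta)$, and $0$ otherwise. Then $\tau(\delta_\alpha*\delta_\beta)\neq 0$ only if $\alpha\beta$ is a unit, that is, $\beta=\alpha^{-1}$. In that case
\[
\tau(\delta_\alpha*\delta_{\alpha^{-1}})=\frac{\sigma(\alpha,\alpha^{-1})}{|G^{(0)}|},
\qquad
\tau(\delta_{\alpha^{-1}}*\delta_{\alpha})=\frac{\sigma(\alpha^{-1},\alpha)}{|G^{(0)}|}.
\]
These are equal by \eqref{eq:cocycle-symmetry}. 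In every other case both sides vanish, because $\alpha\beta$ is a unit if and only if $\beta\alpha$ is (both mean $\beta=\alpha^{-1}$), and composability of one product with $\beta=\alpha^{-1}$ implies composability of the other.

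\emph{Expected obstacle.} The only delicate point is convention. I need to make sure the identification of $C_r^*(G,\Sigma)$ with the $\sigma$-twisted convolution algebra on the discrete $G$ matches the paper's normalization, so that the coefficient really is $\sigma(\alpha,\alpha^{-1})$ rather than, say, its conjugate. I also need that the canonical $E$ is exactly restriction to $G^{(0)}$. Any such convention change affects both sides symmetrically, so the argument via \eqref{eq:cocycle-symmetry} goes through.
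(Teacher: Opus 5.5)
Your proposal is correct and follows essentially the same route as the paper. You realize $\tau$ as $\mu\circ E$ to get positivity and continuity, then check $\tau(\delta_\alpha*\delta_\beta)=\tau(\delta_\beta*\delta_\alpha)$ on point masses, where the only nontrivial case $\beta=\alpha^{-1}$ follows from the symmetry $\sigma(\alpha,\alpha^{-1})=\sigma(\alpha^{-1},\alpha)$ of the normalized cocycle; your remarks on $E(1)=1$ and on the conjugation convention just make explicit what the paper leaves implicit.
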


\begin{proof}
It suffices to check the trace identity on $\delta_\alpha,\delta_\beta$.
If $\beta\neq \alpha^{-1}$, then both $\delta_\alpha*\delta_\beta$ and
$\delta_\beta*\delta_\alpha$ vanish on $G^{(0)}$, so $\tau$ gives zero.
If $\beta=\alpha^{-1}$, then
\[
\delta_\alpha * \delta_{\alpha^{-1}}
=\sigma(\alpha,\alpha^{-1})\,\delta_{r(\alpha)},\quad
\delta_{\alpha^{-1}} * \delta_\alpha
=\sigma(\alpha^{-1},\alpha)\,\delta_{s(\alpha)}.
\]
Hence
\[
\tau(\delta_\alpha * \delta_{\alpha^{-1}})
=\frac{1}{|G^{(0)}|}\sigma(\alpha,\alpha^{-1}),\quad
\tau(\delta_{\alpha^{-1}} * \delta_\alpha)
=\frac{1}{|G^{(0)}|}\sigma(\alpha^{-1},\alpha),
\]
which coincide by~\eqref{eq:cocycle-symmetry}. Thus $\tau(f*g)=\tau(g*f)$.
Positivity is clear since $\tau=\mu\circ E$, and continuity yields a tracial state.
\end{proof}

\begin{corollary}\label{cor:finite-units-impossible}
Let $H$ be an infinite-dimensional Hilbert space.
If $B(H)\cong C_r^*(G,\Sigma)$ for some étale groupoid $G$, then $G^{(0)}$ is not finite.
\end{corollary}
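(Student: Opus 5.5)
The plan is to argue by contradiction, using Proposition~\ref{prop:trace-finite-units}. Suppose $B(H)\cong C_r^*(G,\Sigma)$ with $H$ infinite-dimensional and $G^{(0)}$ finite. Since $G$ is étale, the range fibres $G^u$ are discrete. We need them to be finite, or at least we need $G$ to be discrete. Discreteness holds because $G^{(0)}$ is open in $G$ and finite, so each unit is an open point. Then for every $\gamma\in G$ there is an open bisection $U\ni\gamma$ on which $s$ is a homeomorphism onto an open subset of $G^{(0)}$. The singleton $\{s(\gamma)\}$ is open, so its preimage $\{\gamma\}$ in $U$ is open. Hence $G$ is discrete, as the paper already notes, and Proposition~\ref{prop:trace-finite-units} applies. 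It yields a tracial state $\tau$ on $C_r^*(G,\Sigma)$, and transporting it through the isomorphism gives a tracial state on $B(H)$.

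The remaining step is the standard fact that $B(H)$ has no tracial state when $\dim H=\infty$. Since $H$ is infinite-dimensional, $H\cong H\oplus H$. So there are isometries $s_1,s_2\in B(H)$ with $s_1^*s_1=s_2^*s_2=1$ and $s_1s_1^*+s_2s_2^*=1$. For a tracial state $\tau$ this gives
\[
1=\tau(1)=\tau(s_1s_1^*)+\tau(s_2s_2^*)=\tau(s_1^*s_1)+\tau(s_2^*s_2)=2,
\]
which is a contradiction.

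There is no serious obstacle. The only points to watch are that an isomorphism of $C^*$-algebras carries tracial states to tracial states, which is immediate, and that the discreteness of $G$ really follows from the finiteness of $G^{(0)}$ in the étale setting, which is handled above.
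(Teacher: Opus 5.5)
Your proposal is correct and follows the paper's proof exactly: discreteness of $G$ from finiteness of $G^{(0)}$, then the tracial state from Proposition~\ref{prop:trace-finite-units}, then the nonexistence of tracial states on $B(H)$. The only difference is that you supply the details the paper leaves implicit, namely the bisection argument for discreteness and the standard argument using two isometries coming from $H\cong H\oplus H$.
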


\begin{proof}
If $G^{(0)}$ is finite, then $G$ is discrete. By
Proposition~\ref{prop:trace-finite-units}, $C_r^*(G,\Sigma)$ admits a tracial state.
But $B(H)$ admits no tracial state when $H$ is infinite-dimensional.
\end{proof}

\section{Excluding the infinite atomic case}\label{sec:inf-atomic-case}

By Corollary~\ref{cor:A-ellinftyX}, if
\[
B(H)\cong C_r^*(G,\Sigma)
\]
for some infinite-dimensional Hilbert space $H$, then
\[
A:=C_0(G^{(0)})\cong \ell^\infty(X)
\]
for some index set $X$. Corollary~\ref{cor:finite-units-impossible} then implies
that $X$ must be infinite. We now exclude this remaining case.

\begin{theorem}\label{thm:no-linftyX}
Let $H$ be an infinite-dimensional Hilbert space, let $G$ be a locally compact
Hausdorff étale groupoid, and let $\Sigma$ be a twist over $G$. If
\[
C_0(G^{(0)})\cong \ell^\infty(X)
\]
for some infinite set $X$, then
\[
C_r^*(G,\Sigma)\not\cong B(H).
\]
\end{theorem}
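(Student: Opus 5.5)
The plan is to turn the groupoid structure into a sparsity property of the dense subalgebra $C_c(G,\Sigma)$. Then I exhibit one operator in $B(H)$ that is norm-far from every sufficiently sparse operator. Suppose, towards a contradiction, that $\Phi\colon C_r^*(G,\Sigma)\to B(H)$ is an isomorphism. Since $C_0(G^{(0)})\cong\ell^\infty(X)$ is unital, $G^{(0)}$ is compact and homeomorphic to $\beta X$. Its isolated points are exactly the points of $X$, and they are dense. For $x\in X$ the characteristic function $1_x$ lies in $C_0(G^{(0)})$, and $p_x:=\Phi(1_x)$ is a nonzero projection. These are exactly the minimal projections of $A=\Phi(C_0(G^{(0)}))$, as in Corollary~\ref{cor:A-ellinftyX}, and they are pairwise orthogonal. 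Choose unit vectors $\xi_x\in p_xH$.

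\textbf{Step 1 (sparsity).} Take $f\in C_c(G,\Sigma)$. Because $G$ is étale and $\supp f$ is compact, $f=\sum_{i=1}^N f_i$ with each $f_i$ supported in an open bisection $U_i$. For $x,y\in X$, a direct convolution computation gives that $1_y*f_i*1_x$ is the restriction of $f_i$ to $U_i\cap r^{-1}(y)\cap s^{-1}(x)$. This set contains at most one arrow. Hence $p_y\Phi(f_i)p_x\neq 0$ forces $y=h_i(x)$, where $h_i=r\circ(s|_{U_i})^{-1}$ is the partial homeomorphism of $U_i$. Each $h_i$ is injective, so for every $x$ there are at most $N$ indices $y$ with $p_y\Phi(f)p_x\neq0$. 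In particular, the column vector $(\langle \Phi(f)\xi_x,\xi_y\rangle)_{y\in X}$ has at most $N$ nonzero entries.

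\textbf{Step 2 (the bad operator).} Since $X$ is infinite, pick pairwise disjoint finite subsets $F_n\subseteq X$ with $|F_n|=n$ for $n\ge1$. On $K_n:=\span\{\xi_x:x\in F_n\}$, let $W_n$ be the operator whose matrix in the basis $(\xi_x)_{x\in F_n}$ is the $n\times n$ Fourier matrix. All its entries have modulus $n^{-1/2}$ and it is unitary. Define $T:=\bigoplus_n W_n$ on $\bigoplus_n K_n$ and $T:=0$ on the orthogonal complement. Then $T\in B(H)$ and $\|T\|=1$. By density there is $f\in C_c(G,\Sigma)$ with $\|T-\Phi(f)\|<1/2$. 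Let $N$ be as in Step 1, and take $n>4N$ and $x\in F_n$. Let $Q_n$ be the projection onto $K_n$. Then
\[
\|T-\Phi(f)\|\ \ge\ \|Q_n(T-\Phi(f))\xi_x\|\ \ge\ \Bigl(\sum_{y\in F_n,\ p_y\Phi(f)p_x=0}|\langle T\xi_x,\xi_y\rangle|^2\Bigr)^{1/2}\ \ge\ \sqrt{\tfrac{n-N}{n}}>\tfrac12 .
\]
This is a contradiction, so $\Phi$ cannot exist.

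\textbf{Main obstacle.} The delicate point is Step 1. There I must check carefully that left and right convolution by $1_y$ and $1_x$ is restriction to $r^{-1}(y)\cap s^{-1}(x)$ in the twisted setting. This needs a local trivialization of $\Sigma$ over each bisection, or a direct argument with sections of the associated line bundle. I must also check that the image of $1_x$ under $\Phi$ is the relevant minimal projection. Isolatedness of $x$ in $\beta X$ is what makes $1_x$ continuous, and it is the only topological input used. Once this sparsity statement is in place, Step 2 is an elementary finite-dimensional estimate.
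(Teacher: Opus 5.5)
Your argument is correct and is essentially the paper's proof. Each $f\in C_c(G,\Sigma)$ is block-sparse because $\supp f$ is covered by finitely many open bisections, and this is played against an operator that spreads a unit vector of one atom uniformly over $n$ atoms for arbitrarily large $n$; your Fourier blocks do the job of the paper's partial isometry $V$. The identity $(1_y*f*1_x)(\sigma)=1_y(r(\sigma))\,f(\sigma)\,1_x(s(\sigma))$ that you flag as the main obstacle is only the standard $C_0(G^{(0)})$-bimodule formula, which the paper also uses.

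Your version is a little leaner in two ways. You approximate directly by a single $f\in C_c(G,\Sigma)$, so you do not need the classes $S_k$ or their stability under products. You also test only the matrix coefficients $\langle\Phi(f)\xi_x,\xi_y\rangle$, so you never need $\sum_x p_x=1$ strongly.
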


\subsection{Atoms and corners}

Assume for contradiction that
\[
B(H)\cong C_r^*(G,\Sigma)
\qquad\text{and}\qquad
A=C_0(G^{(0)})\cong \ell^\infty(X),
\]
with $X$ infinite. Fix an isomorphism
$\Phi:C_r^*(G,\Sigma)\to B(H)$ and henceforth identify elements with their
images under $\Phi$; in particular, regard $A$ as a subalgebra of $B(H)$.
Let $(u_x)_{x\in X}$ be the corresponding family of isolated
points in $G^{(0)}$, and let
\[
p_x:=1_{\{u_x\}}\in A
\qquad (x\in X)
\]
be the minimal projections. Then
\[
H=\bigoplus_{x\in X} H_x,
\qquad
H_x:=p_xH.
\]

We first show that each $H_x$ is finite-dimensional. For $x\in X$, let
\[
G(x):=G_{u_x}^{u_x}=r^{-1}(u_x)\cap s^{-1}(u_x)
\]
be the isotropy group at $u_x$, and let $\Sigma(x):=\Sigma|_{G(x)}$.

\begin{lemma}\label{lem:corner-isotropy}
For each $x\in X$ there is a canonical isomorphism
\[
p_x C_r^*(G,\Sigma)p_x \cong C_r^*(G(x),\Sigma(x)).
\]
\end{lemma}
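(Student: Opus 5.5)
Since $u_x$ is an isolated point of $G^{(0)}$, the characteristic function $p_x=1_{\{u_x\}}$ lies in $C_c(G^{(0)})\subseteq C_c(G,\Sigma)$. The plan is to compute the corner at the level of compactly supported sections first, and then pass to the reduced completions by comparing the regular representations.

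\textbf{Step 1: the algebraic corner.} For $f\in C_c(G,\Sigma)$ the convolution formulas give
\[
(p_x * f * p_x)(\sigma)=1_{\{u_x\}}(r(\sigma))\,f(\sigma)\,1_{\{u_x\}}(s(\sigma)).
\]
Thus $p_x*f*p_x$ is the restriction of $f$ to $\Sigma(x)$, extended by zero. Because $\{u_x\}$ is open, $G(x)=r^{-1}(u_x)\cap s^{-1}(u_x)$ is open in $G$. Because $G$ is étale, $G(x)$ is also discrete, and it is closed since $\{u_x\}$ is closed. So restriction defines a map $C_c(G,\Sigma)\to C_c(G(x),\Sigma(x))$, and extension by zero gives the inverse on $p_xC_c(G,\Sigma)p_x$. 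The convolution and involution on $G(x)$ are the restrictions of those on $G$: for $\alpha,\beta\in G(x)$, a composable factorization of $\alpha\beta$ with both factors in $r^{-1}(u_x)$ and $s^{-1}(u_x)$ forces both factors into $G(x)$. Hence $p_xC_c(G,\Sigma)p_x\cong C_c(G(x),\Sigma(x))$ as $*$-algebras, and this subalgebra is dense in $p_xC_r^*(G,\Sigma)p_x$.

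\textbf{Step 2: equality of norms.} This is the main obstacle. I would use the fact that the reduced norm is the supremum over $v\in G^{(0)}$ of the norms of the regular representations $\pi_v$ on $\ell^2(G_v,\Sigma)$ (sections of the associated line bundle). For $f$ supported in $G(x)$, the operator $\pi_v(f)$ vanishes unless $G_v$ meets $r^{-1}(u_x)$, that is, unless $v$ lies in the orbit of $u_x$. For such $v$, pick $\gamma\in G$ with $s(\gamma)=v$ and $r(\gamma)=u_x$. Then $\pi_v(f)$ acts only on $\ell^2(G(x)\gamma)$. Right translation by a lift of $\gamma$ in $\Sigma$ intertwines this action with the left regular representation of $C_c(G(x),\Sigma(x))$ on $\ell^2(G(x),\Sigma(x))$, which is $\pi_{u_x}$ restricted to that subspace. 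So $\|p_xfp_x\|_{C_r^*(G,\Sigma)}$ equals the reduced norm of its restriction in $C_r^*(G(x),\Sigma(x))$. The isomorphism of Step 1 is therefore isometric, and it extends to the completions, giving $p_xC_r^*(G,\Sigma)p_x\cong C_r^*(G(x),\Sigma(x))$.

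\textbf{Step 3: canonicity.} The map is compatible with the conditional expectations, since restriction to $u_x$ intertwines $E$ with the canonical trace on $C_r^*(G(x),\Sigma(x))$. This also yields an alternative proof of injectivity: both expectations are faithful, so it suffices to check that the map preserves them. I would present this version if the orbit and norm bookkeeping in Step 2 becomes too heavy.
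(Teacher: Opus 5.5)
Your proposal is correct and follows the same route as the paper: identify $p_xC_c(G,\Sigma)p_x$ with $C_c(G(x),\Sigma(x))$, then compare the reduced norms through the regular representations at the points of $G^{(0)}$. Your Step~2 is in fact more careful than the paper, which asserts $\lambda_u(f)=0$ for every $u\neq u_x$; that assertion fails for $u$ in the orbit of $u_x$, and the conclusion is rescued by your observation that $\lambda_u(f)$ then lives on $\ell^2(G(x)\gamma)$ and is unitarily equivalent, via right translation by a lift of $\gamma$, to the regular representation of $(G(x),\Sigma(x))$.
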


\begin{proof}
At the level of compactly supported sections,
\[
p_x C_c(G,\Sigma)p_x = C_c(G(x),\Sigma(x)),
\]
since left and right multiplication by $p_x=1_{\{u_x\}}$ cuts the support to
arrows with range and source equal to $u_x$.

It remains to compare the reduced norms. For $f\in C_c(G(x),\Sigma(x))$, viewed
as an element of $C_c(G,\Sigma)$, the reduced norm in $C_r^*(G,\Sigma)$ is
\[
\|f\|_r=\sup_{u\in G^{(0)}} \|\lambda_u(f)\|,
\]
where $\lambda_u$ denotes the regular representation at $u$. If $u\neq u_x$,
then $\lambda_u(f)=0$, because $f$ is supported on arrows with source $u_x$.
For $u=u_x$, the representation $\lambda_{u_x}$ restricts exactly to the
regular representation of the discrete twisted group $(G(x),\Sigma(x))$.
Therefore
\[
\|f\|_{C_r^*(G,\Sigma)}=\|\lambda_{u_x}(f)\|
=\|f\|_{C_r^*(G(x),\Sigma(x))}.
\]
So the inclusion
\[
C_c(G(x),\Sigma(x))\hookrightarrow p_xC_c(G,\Sigma)p_x
\]
is isometric for the reduced norms, and completion yields the result.
\end{proof}

\begin{lemma}\label{lem:corner-trace}
For each $x\in X$, the corner $p_x C_r^*(G,\Sigma)p_x$ admits a faithful
tracial state.
\end{lemma}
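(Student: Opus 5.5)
By Lemma~\ref{lem:corner-isotropy}, the corner $p_x C_r^*(G,\Sigma)p_x$ is isomorphic to $C_r^*(G(x),\Sigma(x))$. Here $G(x)$ is a discrete group, since $G$ is étale and $u_x$ is an isolated point of $G^{(0)}$, and $\Sigma(x)$ is a twist over it. So it suffices to produce a faithful tracial state on the reduced twisted group $C^*$-algebra of a discrete group. The natural candidate is the canonical trace
\[
\tau_x(f)=f(e)\qquad (f\in C_c(G(x),\Sigma(x))),
\]
where $e=u_x$ is the identity. Equivalently, $\tau_x$ is the composition of the canonical conditional expectation $E$ (restricted to the corner) with evaluation at $u_x$. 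It can also be written as the vector state $\langle\lambda(f)\delta_e,\delta_e\rangle$ in the regular representation on $\ell^2(G(x))$.

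The first step is to fix the description of the twist. As in the finite-unit-space case, the twist over the discrete group $G(x)$ is topologically trivial. It is therefore given by a normalized $2$-cocycle $\sigma_x$, and $C_c(G(x),\Sigma(x))$ becomes the twisted group algebra spanned by $\delta_g$ with $\delta_g*\delta_h=\sigma_x(g,h)\delta_{gh}$.

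Next I check the trace identity on the generators, exactly as in Proposition~\ref{prop:trace-finite-units}. If $h\neq g^{-1}$, both $\tau_x(\delta_g*\delta_h)$ and $\tau_x(\delta_h*\delta_g)$ vanish. If $h=g^{-1}$, the two values are $\sigma_x(g,g^{-1})$ and $\sigma_x(g^{-1},g)$, which agree by~\eqref{eq:cocycle-symmetry}. Since $\tau_x$ is a vector state in the regular representation, it is positive, continuous and of norm one. It therefore extends to a tracial state on $C_r^*(G(x),\Sigma(x))$ by continuity.

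The main point is faithfulness, and I would argue it through the trace property rather than by a direct computation. Suppose $\tau_x(a^*a)=0$ for some $a\in C_r^*(G(x),\Sigma(x))$. By traciality and invariance of the trace under the unitaries $\lambda(\delta_g)$, we get
\[
\|\lambda(a)\delta_g\|^2=\langle\lambda(\delta_g)^*\lambda(a^*a)\lambda(\delta_g)\delta_e,\delta_e\rangle=\tau_x(\delta_g^*a^*a\,\delta_g)=\tau_x(a^*a)=0,
\]
up to unimodular scalars coming from the cocycle: $\lambda(\delta_g)\delta_e$ is a unimodular multiple of $\delta_g$, and $\delta_g^*\delta_g$ is a unimodular multiple of $\delta_e$. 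Thus $\lambda(a)$ vanishes on the orthonormal basis $\{\delta_g\}$ of $\ell^2(G(x))$, so $a=0$ in the reduced algebra.

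The only care needed is in tracking these cocycle phases, which I expect to be routine. An alternative route is to note that $E$ is faithful on $C_r^*(G,\Sigma)$ and that on the corner $E$ takes values in $\C p_x$, so $\tau_x$ is just $E$ read off as a scalar; faithfulness would then follow directly. I would mention this as the cleaner argument and transport the trace to $p_xC_r^*(G,\Sigma)p_x$ via Lemma~\ref{lem:corner-isotropy}.
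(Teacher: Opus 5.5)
Your proposal is correct and follows the paper's route. Like the paper, you reduce via Lemma~\ref{lem:corner-isotropy} to $C_r^*(G(x),\Sigma(x))$ and use the canonical trace $\tau_x(f)=f(u_x)$, which the paper simply asserts is a faithful tracial state, and your checks fill in what the paper leaves implicit: traciality via the cocycle symmetry, positivity as a vector state, and faithfulness either through the vectors $\delta_g$ or through faithfulness of $E$, which takes values in $\C p_x$ on the corner.
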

\begin{proof}
By Lemma~\ref{lem:corner-isotropy}, it suffices to consider
$C_r^*(G(x),\Sigma(x))$. Since $G(x)$ is a discrete group, the canonical trace
is given on $C_c(G(x),\Sigma(x))$ by
\[
\tau_x(f)=f(u_x).
\]
This extends to a faithful tracial
state on $C_r^*(G(x),\Sigma(x))$.
\end{proof}

On the other hand,
\[
p_x C_r^*(G,\Sigma)p_x \cong p_xB(H)p_x \cong B(H_x).
\]

\begin{corollary}\label{cor:Hx-finite}
For every $x\in X$, the Hilbert space $H_x$ is finite-dimensional.
\end{corollary}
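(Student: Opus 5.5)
We combine Lemma~\ref{lem:corner-trace} with the identification $p_x C_r^*(G,\Sigma)p_x\cong B(H_x)$ recorded just before the statement. The plan is to show that $B(H_x)$ carries a (faithful) tracial state, and then use that $B(K)$ has no tracial state when $K$ is infinite-dimensional.

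First we justify the identification of corners. Under the fixed isomorphism $\Phi$, the projection $p_x\in C_r^*(G,\Sigma)$ corresponds to a projection in $B(H)$ with range $H_x$. Hence $\Phi$ restricts to a $*$-isomorphism
\[
p_xC_r^*(G,\Sigma)p_x\;\cong\; p_xB(H)p_x,
\]
and $p_xB(H)p_x\cong B(H_x)$ by compressing operators to $H_x$. By Lemma~\ref{lem:corner-trace}, transporting the faithful tracial state $\tau_x$ through these isomorphisms gives a tracial state $\tau$ on $B(H_x)$. Note that $H_x\neq 0$ because $p_x\neq0$.

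Next, suppose for contradiction that $H_x$ is infinite-dimensional. Then we may write $H_x=K_1\oplus K_2$ with $K_1$ and $K_2$ both of dimension $\dim H_x$. This yields isometries $s_1,s_2\in B(H_x)$ with
\[
s_i^*s_i=1 \quad (i=1,2),\qquad s_1s_1^*+s_2s_2^*=1.
\]
Applying the trace gives
\[
1=\tau(1)=\tau(s_1s_1^*)+\tau(s_2s_2^*)=\tau(s_1^*s_1)+\tau(s_2^*s_2)=2,
\]
a contradiction. This is the same fact already invoked in Corollary~\ref{cor:finite-units-impossible}, so we could also simply cite it. Therefore $\dim H_x<\infty$.

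No step is a real obstacle. The only point needing care is that the isomorphism $\Phi$ carries $p_x$ to the projection onto $H_x$, so that the corner really is $B(H_x)$; this holds by construction, since $H_x$ was defined as $p_xH$ after identifying $A$ with its image in $B(H)$. Faithfulness of the trace is not needed for this corollary; it would only matter for finer information, such as determining the isotropy groups.
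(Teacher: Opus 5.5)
Your proposal is correct and follows the paper's proof: transport the tracial state from Lemma~\ref{lem:corner-trace} to $B(H_x)$ via $p_xC_r^*(G,\Sigma)p_x\cong p_xB(H)p_x\cong B(H_x)$, then use that $B(H_x)$ has no tracial state when $H_x$ is infinite-dimensional. Your explicit argument with two isometries $s_1,s_2$ satisfying $s_1s_1^*+s_2s_2^*=1$ just fills in that standard fact, and you are right that faithfulness of the trace plays no role here.
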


\begin{proof}
By Lemma~\ref{lem:corner-trace}, the algebra $B(H_x)$ admits a faithful tracial
state. This is impossible if $H_x$ were infinite-dimensional.
\end{proof}

\subsection{Compact support implies block sparsity}

Let $f\in C_c(G,\Sigma)$ and set $K:=\supp(f)$. Since $G$ is étale, every point
of $K$ admits an open bisection neighbourhood, and by compactness there exist
open bisections $U_1,\dots,U_m$ such that $K\subseteq U_1\cup\cdots\cup U_m$.

\begin{lemma}\label{lem:fiber-count}
For each $x\in X$,
\[
|K\cap r^{-1}(u_x)|\le m,
\qquad
|K\cap s^{-1}(u_x)|\le m.
\]
\end{lemma}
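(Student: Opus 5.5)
The argument is a pigeonhole count over the bisection cover $U_1,\dots,U_m$ of $K$ fixed just before the statement. The only property of bisections used is that the range map $r$ and the source map $s$ are injective on each $U_j$.

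For the first bound, fix $x\in X$ and consider $K\cap r^{-1}(u_x)$. Since $K\subseteq U_1\cup\cdots\cup U_m$, we have
\[
K\cap r^{-1}(u_x)\subseteq \bigcup_{j=1}^m \bigl(U_j\cap r^{-1}(u_x)\bigr).
\]
Because $U_j$ is a bisection, $r|_{U_j}$ is injective, so $U_j\cap r^{-1}(u_x)$ contains at most one arrow. A union of $m$ sets with at most one element each has at most $m$ elements, which gives $|K\cap r^{-1}(u_x)|\le m$.

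The bound for $s^{-1}(u_x)$ follows in exactly the same way, using injectivity of $s|_{U_j}$.

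No step here is a genuine obstacle. The one thing to check is that "open bisection" is used in the standard sense that both $r$ and $s$ restrict to homeomorphisms onto open subsets, so that both are injective on each $U_j$. The bound is uniform in $x$ and depends only on $f$ through $m$; this uniformity is what the later block-sparsity argument will use. In particular, the estimate does not need $u_x$ to be isolated, and it holds at every unit.
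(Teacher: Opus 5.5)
Your proposal is correct and follows the paper's proof. Injectivity of $r|_{U_j}$ and $s|_{U_j}$ leaves at most one arrow in each $U_j\cap r^{-1}(u_x)$ and $U_j\cap s^{-1}(u_x)$, and summing over the $m$ bisections covering $K$ gives the bound.
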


\begin{proof}
Since each $U_j$ is a bisection, both $r|_{U_j}$ and $s|_{U_j}$ are injective.
Hence each $U_j\cap r^{-1}(u_x)$ and each $U_j\cap s^{-1}(u_x)$ contains at most one
point. Summing over $j=1,\dots,m$ gives the result.
\end{proof}

Let $T_f\in B(H)$ be the image of $f$ under the fixed isomorphism. Since the
mutually orthogonal projections $(p_x)_{x\in X}$ have strong sum $1$, every
operator on $H=\bigoplus_{x\in X}H_x$ is determined by its block matrix. Thus write
\[
T_f=(T_{yx})_{x,y\in X},
\qquad
T_{yx}:=p_yT_fp_x\in B(H_x,H_y).
\]

\begin{lemma}\label{lem:block-support}
For all $x,y\in X$,
\[
\supp(p_y f p_x)\subseteq K\cap r^{-1}(u_y)\cap s^{-1}(u_x).
\]
In particular:
\begin{enumerate}[label=\rm(\alph*)]
\item for each fixed $x\in X$, there are at most $m$ elements $y\in X$ such that
$T_{yx}\neq 0$;
\item for each fixed $y\in X$, there are at most $m$ elements $x\in X$ such that
$T_{yx}\neq 0$.
\end{enumerate}
\end{lemma}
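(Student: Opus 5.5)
The plan is to compute the convolution products $p_y * f * p_x$ directly at the level of $C_c(G,\Sigma)$, and then transport the resulting support information through the isomorphism $\Phi$ to the block entries $T_{yx}$.

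\textbf{Step 1: the support inclusion.} Since $u_x$ is an isolated point of $G^{(0)}$ and $G^{(0)}$ is open in $G$ (as $G$ is étale), the projection $p_x=1_{\{u_x\}}$ is a genuine element of $C_c(G^{(0)})\subseteq C_c(G,\Sigma)$. The standard convolution formulas for multiplication by elements of $C_0(G^{(0)})$ give, for $\gamma\in G$ (or $\Sigma$, with the same formula on the level of sections),
\[
(p_y * f * p_x)(\gamma)=p_y(r(\gamma))\,f(\gamma)\,p_x(s(\gamma)).
\]
This is the same support-cutting observation already used in the proof of Lemma~\ref{lem:corner-isotropy}. Hence $p_yfp_x$ vanishes off $\{\gamma: r(\gamma)=u_y,\ s(\gamma)=u_x\}$. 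It also vanishes off the support of $f$, so its support lies in $K\cap r^{-1}(u_y)\cap s^{-1}(u_x)$. Taking closures causes no problem: $r^{-1}(u_y)$ and $s^{-1}(u_x)$ are closed, being preimages of points in a Hausdorff space, and $K$ is closed.

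\textbf{Step 2: item (a).} Fix $x$. If $T_{yx}\neq 0$, then $p_yfp_x\neq 0$, because $T_{yx}=\Phi(p_y f p_x)$ and $\Phi$ is injective. The embedding $C_c\to C_r^*$ is injective since $E$ is faithful; alternatively, a nonzero section has nonzero reduced norm. So the support of $p_yfp_x$ is nonempty, and there is some $\gamma_y\in K\cap s^{-1}(u_x)$ with $r(\gamma_y)=u_y$. Distinct $y$ produce distinct $\gamma_y$, since their ranges differ. Therefore the number of such $y$ is at most $|K\cap s^{-1}(u_x)|\le m$ by Lemma~\ref{lem:fiber-count}.

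\textbf{Step 3: item (b).} This is symmetric. For fixed $y$, each $x$ with $T_{yx}\neq0$ yields a distinct $\gamma\in K\cap r^{-1}(u_y)$, distinguished by their sources. Lemma~\ref{lem:fiber-count} again bounds the count by $m$.

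The only mildly delicate point, and the one I expect to be the main obstacle, is the identification $T_{yx}=\Phi(p_y)T_f\Phi(p_x)$ together with the nonvanishing argument. Multiplicativity of $\Phi$ gives the identification, and injectivity of the map $C_c(G,\Sigma)\to C_r^*(G,\Sigma)$ gives the nonvanishing. Beyond that, the argument is pure bookkeeping.
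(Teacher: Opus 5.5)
Your proof is correct and follows the paper's argument essentially verbatim. You cut the support via $(p_yfp_x)(\gamma)=p_y(r(\gamma))f(\gamma)p_x(s(\gamma))$, then count arrows in $K\cap s^{-1}(u_x)$ (resp.\ $K\cap r^{-1}(u_y)$) with Lemma~\ref{lem:fiber-count}, distinct $y$ (resp.\ $x$) being separated by range (resp.\ source). One small remark: the injectivity you flag as the delicate point is not used, because $T_{yx}\neq0\Rightarrow p_yfp_x\neq0$ only needs $\Phi$ and the map $C_c(G,\Sigma)\to C_r^*(G,\Sigma)$ to send $0$ to $0$.
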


\begin{proof}
Left multiplication by $p_y$ forces the range to be $u_y$, and right
multiplication by $p_x$ forces the source to be $u_x$, so
\[
\supp(p_y f p_x)\subseteq K\cap r^{-1}(u_y)\cap s^{-1}(u_x).
\]
If $T_{yx}\neq 0$, then $p_y f p_x\neq 0$, hence this intersection is nonempty.

For fixed $x$, distinct $y$ give distinct elements of $K\cap s^{-1}(u_x)$,
so Lemma~\ref{lem:fiber-count} yields at most $m$ possibilities. The row estimate
is analogous.
\end{proof}

\subsection{The sparse classes $S_k$}

For $k\in\N$, let $S_k$ be the set of all block operators
$T=(T_{yx})_{x,y\in X}\in B(H)$ such that every row and every column has at
most $k$ nonzero blocks, i.e.
\[
\sup_{x\in X} |\{y\in X:T_{yx}\neq 0\}|\le k,
\qquad
\sup_{y\in X} |\{x\in X:T_{yx}\neq 0\}|\le k.
\]

By Lemma~\ref{lem:block-support}, every $T_f$ with $f\in C_c(G,\Sigma)$
belongs to some $S_k$.

\begin{lemma}\label{lem:Sk-stability}
If $T\in S_k$ and $R\in S_\ell$, then $T+R\in S_{k+\ell}$,
$TR\in S_{k\ell}$, and $T^*\in S_k$.
\end{lemma}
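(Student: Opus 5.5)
The plan is to work block by block in the decomposition $H=\bigoplus_{x\in X}H_x$, using only the block matrix identities
\[
(T+R)_{yx}=T_{yx}+R_{yx},\qquad (TR)_{yx}=\sum_{z\in X}T_{yz}R_{zx},\qquad (T^*)_{yx}=(T_{xy})^*.
\]
These follow from $\sum_z p_z=1$ strongly: $p_yTRp_x=\sum_z p_yTp_zRp_x$, with the sum converging strongly. For each $x$ write $\mathrm{col}_T(x):=\{y:T_{yx}\neq0\}$, and for each $y$ write $\mathrm{row}_T(y):=\{x:T_{yx}\neq 0\}$. The hypothesis $T\in S_k$ says exactly that all these sets have cardinality at most $k$.

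\emph{Sum.} If $(T+R)_{yx}\neq0$, then $T_{yx}\neq0$ or $R_{yx}\neq0$. Hence $\mathrm{col}_{T+R}(x)\subseteq\mathrm{col}_T(x)\cup\mathrm{col}_R(x)$, which has at most $k+\ell$ elements. The row sets are handled the same way.

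\emph{Adjoint.} Since $(T^*)_{yx}=T_{xy}^*$ is nonzero exactly when $T_{xy}$ is, the sets are swapped: $\mathrm{col}_{T^*}(x)=\mathrm{row}_T(x)$ and $\mathrm{row}_{T^*}(y)=\mathrm{col}_T(y)$. Both have at most $k$ elements, so $T^*\in S_k$.

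\emph{Product.} This is the only step needing care, because the block formula for $TR$ is a priori an infinite strongly convergent sum. Fix $x$. Then $R_{zx}=p_zRp_x\neq0$ only for $z\in\mathrm{col}_R(x)$, a set of at most $\ell$ elements. Since $Rp_x=\sum_z p_zRp_x$, we get
\[
(TR)_{yx}=\sum_{z\in\mathrm{col}_R(x)}T_{yz}R_{zx},
\]
a finite sum, so no convergence issue remains. If $(TR)_{yx}\neq0$, some $z\in\mathrm{col}_R(x)$ has $T_{yz}\neq0$, that is, $y\in\mathrm{col}_T(z)$. Therefore
\[
\mathrm{col}_{TR}(x)\subseteq\bigcup_{z\in\mathrm{col}_R(x)}\mathrm{col}_T(z),
\]
which has cardinality at most $\ell k$.

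For rows, fix $y$ and use $p_yT=\sum_{z\in\mathrm{row}_T(y)}p_yTp_z$, again a finite sum. The same argument gives
\[
\mathrm{row}_{TR}(y)\subseteq\bigcup_{z\in\mathrm{row}_T(y)}\mathrm{row}_R(z),
\]
which has at most $k\ell$ elements. Alternatively, apply the column argument to $(TR)^*=R^*T^*$ together with the adjoint case.

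The main obstacle is justifying the reduction of the block product formula to a finite sum in the product case. Writing $Rp_x$ (respectively $p_yT$) as a finite sum of its nonzero blocks handles this directly. The rest is counting.
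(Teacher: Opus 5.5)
Your proof is correct and follows the paper's argument. The sum and adjoint cases are handled blockwise. For products, you fix a column $x$, note there are at most $\ell$ indices $z$ with $R_{zx}\neq 0$ and, for each, at most $k$ indices $y$ with $T_{yz}\neq 0$, and treat rows analogously. Your explicit reduction $Rp_x=\sum_{z\in\mathrm{col}_R(x)}p_zRp_x$ simply makes rigorous the paper's brief remark that ``the sum is finite.''
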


\begin{proof}
The statement for sums is immediate, and taking adjoints exchanges rows and
columns, so $T^*\in S_k$. For products, fix $x\in X$. There are at
most $\ell$ elements $z$ with $R_{zx}\neq 0$, and for each such $z$ there are at
most $k$ elements $y$ with $T_{yz}\neq 0$. Since
\[
(TR)_{yx}=\sum_{z\in X} T_{yz}R_{zx},
\]
and the sum is finite, there are at most $k\ell$ elements $y$ with $(TR)_{yx}\neq 0$.
The row estimate is analogous.
\end{proof}

\begin{corollary}\label{cor:dense-subalg-sparse}
The $*$-subalgebra generated by $C_c(G,\Sigma)$ is contained in $\bigcup_{k\ge1}S_k$.
Consequently,
\[
C_r^*(G,\Sigma)\subseteq \overline{\bigcup_{k\ge1}S_k}^{\|\cdot\|}.
\]
\end{corollary}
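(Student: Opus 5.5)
The corollary has two parts, and I would prove them in order.

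\textbf{The $*$-subalgebra.} Let $\mathcal B$ be the $*$-subalgebra of $B(H)$ generated by $\{T_f : f\in C_c(G,\Sigma)\}$. Since $f\mapsto T_f$ is the restriction of the fixed $*$-isomorphism $\Phi$, the set $\{T_f\}$ is already closed under sums, products and adjoints, because $C_c(G,\Sigma)$ is a $*$-subalgebra. So in fact $\mathcal B=\Phi(C_c(G,\Sigma))$. To avoid depending on this observation, I would argue directly.
\begin{itemize}
\item By Lemma~\ref{lem:block-support}, each generator $T_f$ lies in $S_m$, where $m$ is the number of bisections covering $\supp(f)$.
\item Every element of $\mathcal B$ is a finite linear combination of finite products of generators and their adjoints.
\item Lemma~\ref{lem:Sk-stability} gives closure under adjoints, products and sums, with explicit bounds on the index.
\item Scalar multiples preserve $S_k$, since they do not create new nonzero blocks.
\end{itemize}
An induction on the length of such an expression then places every element of $\mathcal B$ in some $S_k$. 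Hence $\mathcal B\subseteq\bigcup_k S_k$.

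\textbf{The closure.} By construction, $C_c(G,\Sigma)$ is dense in $C_r^*(G,\Sigma)$ in the reduced norm. The map $\Phi$ is a $*$-isomorphism of $C^*$-algebras, hence isometric. Therefore $\Phi(C_c(G,\Sigma))$ is norm-dense in $\Phi(C_r^*(G,\Sigma))$, which under our identification is $C_r^*(G,\Sigma)\subseteq B(H)$. Taking norm closures in $B(H)$ gives
\[
C_r^*(G,\Sigma)=\overline{\Phi(C_c(G,\Sigma))}\subseteq\overline{\textstyle\bigcup_k S_k}^{\|\cdot\|}.
\]

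\textbf{Expected difficulty.} I do not expect a real obstacle. The only point needing care is that the block decomposition $T_{yx}=p_yT_fp_x$ is taken in $B(H)$ via $\Phi$. This is consistent because $\Phi$ is multiplicative, so $\Phi(p_yfp_x)=p_yT_fp_x$, which is what makes Lemma~\ref{lem:block-support} apply to the generators.
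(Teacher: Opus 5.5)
Your proposal is correct and follows the paper's proof. Lemma~\ref{lem:block-support} places each $T_f$ in some $S_k$, Lemma~\ref{lem:Sk-stability} (with the trivial closure under scalars) handles the $*$-algebra operations, and density of $C_c(G,\Sigma)$ together with the isometry of $\Phi$ gives the closure statement. Your side remark that $\Phi(C_c(G,\Sigma))$ is already a $*$-subalgebra is also right, and it shows that Lemma~\ref{lem:Sk-stability} is not actually needed for this corollary.
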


\begin{proof}
Each compactly supported section belongs to some $S_k$, and
Lemma~\ref{lem:Sk-stability} shows that finite sums, products, and adjoints remain
in $\bigcup_k S_k$. Taking the norm closure gives the result.
\end{proof}

\subsection{A spreading operator}

Since $X$ is infinite, we may choose pairwise disjoint finite subsets
$X_1,X_2,\dots\subseteq X$ with $|X_r|=r$ for all $r\ge1$, and points
$j_r\in X_r$. For each $x\in X$, choose a unit vector $\eta_x\in H_x$.

Define
\[
\xi_r:=\frac1{\sqrt r}\sum_{x\in X_r}\eta_x.
\]
Since the $H_x$ are mutually orthogonal and the sets $X_r$ are disjoint, the
vectors $\xi_r$ form an orthonormal family.

Let $V\in B(H)$ be defined by
\[
V\eta_{j_r}=\xi_r \qquad (r\ge1),
\]
and $V=0$ on the orthogonal complement of
$\span\{\eta_{j_r}:r\ge1\}$. Then $V$ is a partial isometry.

\begin{lemma}\label{lem:distance-estimate}
If $T\in S_k$, then $\|T-V\|\ge1$. In particular,
\[
V\notin \overline{\bigcup_{k\ge1}S_k}^{\|\cdot\|}.
\]
\end{lemma}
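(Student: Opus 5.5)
Fix $T\in S_k$ and pick $r>k$; the idea is to test $T-V$ on the unit vector $\eta_{j_r}$ and show $\|(T-V)\eta_{j_r}\|\ge 1$. Since $\eta_{j_r}\in H_{j_r}$, the block structure gives
\[
T\eta_{j_r}=\sum_{y\in X} T_{y j_r}\eta_{j_r},
\]
and by the definition of $S_k$ at most $k$ of the blocks $T_{yj_r}$ (with $x=j_r$ fixed) are nonzero. Let $F\subseteq X$ be this set of at most $k$ indices. Then $T\eta_{j_r}\in\bigoplus_{y\in F}H_y$, while $V\eta_{j_r}=\xi_r=r^{-1/2}\sum_{x\in X_r}\eta_x$.

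Next, let $P$ be the orthogonal projection onto $\bigoplus_{y\in X\setminus F}H_y$. Then $PT\eta_{j_r}=0$ and $P\xi_r=r^{-1/2}\sum_{x\in X_r\setminus F}\eta_x$. Because the $\eta_x$ are orthonormal and $|X_r\setminus F|\ge r-k$,
\[
\|(T-V)\eta_{j_r}\|\ \ge\ \|P(T-V)\eta_{j_r}\|=\|P\xi_r\|=\sqrt{\tfrac{|X_r\setminus F|}{r}}\ \ge\ \sqrt{1-\tfrac{k}{r}}.
\]
Hence $\|T-V\|\ge\sqrt{1-k/r}$ for every $r>k$. Letting $r\to\infty$ gives $\|T-V\|\ge 1$.

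Since the bound holds uniformly for all $T\in\bigcup_k S_k$, the distance from $V$ to this union is at least $1$. This distance is unchanged by passing to the norm closure, so $V\notin\overline{\bigcup_k S_k}$.

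The main point is to take $r$ larger than $k$ and to use the column bound (the fixed source index $j_r$) rather than the row bound. The rest is orthogonality bookkeeping, and no step appears genuinely hard.
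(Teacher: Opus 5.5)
Your proof is correct and follows the paper's argument: test $T-V$ on $\eta_{j_r}$ for $r>k$, use the column bound to confine $T\eta_{j_r}$ to at most $k$ summands, and bound the part of $\xi_r$ orthogonal to those summands below by $\sqrt{1-k/r}$. Your projection $P$ onto the complementary summands is the paper's distance-to-subspace estimate in different words, and your remark that a uniform distance bound survives passage to the norm closure spells out the ``in particular'' step that the paper leaves implicit.
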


\begin{proof}
Fix $r>k$. Since $T\in S_k$, the $j_r$-th column of $T$ has at most $k$ nonzero
blocks. Hence there exists a set $F_r\subseteq X$ with $|F_r|\le k$ such that
\[
T\eta_{j_r}\in \bigoplus_{x\in F_r} H_x.
\]
On the other hand,
\[
V\eta_{j_r}=\xi_r=\frac1{\sqrt r}\sum_{x\in X_r}\eta_x.
\]

The orthogonal projection of $\xi_r$ onto $\bigoplus_{x\in F_r}H_x$ is
\[
\frac1{\sqrt r}\sum_{x\in F_r\cap X_r}\eta_x,
\]
whose squared norm is
\[
\frac{|F_r\cap X_r|}{r}\le \frac{k}{r}.
\]
Therefore
\[
\dist\!\left(\xi_r,\bigoplus_{x\in F_r}H_x\right)\ge \sqrt{1-\frac{k}{r}}.
\]
Since $T\eta_{j_r}\in \bigoplus_{x\in F_r}H_x$, it follows that
\[
\|T\eta_{j_r}-\xi_r\|
\ge \dist\!\left(\xi_r,\bigoplus_{x\in F_r}H_x\right)
\ge \sqrt{1-\frac{k}{r}}.
\]
Hence
\[
\|T-V\|\ge \|T\eta_{j_r}-V\eta_{j_r}\|
\ge \sqrt{1-\frac{k}{r}}.
\]
Letting $r\to\infty$ gives $\|T-V\|\ge1$.
\end{proof}

\subsection{Conclusion of the infinite case}

\begin{proof}[Proof of Theorem~\ref{thm:no-linftyX}]
By Corollary~\ref{cor:dense-subalg-sparse},
\[
C_r^*(G,\Sigma)\subseteq \overline{\bigcup_{k\ge1}S_k}^{\|\cdot\|}.
\]
But $V\notin \overline{\bigcup_{k\ge1}S_k}^{\|\cdot\|}$ by
Lemma~\ref{lem:distance-estimate}. Hence $C_r^*(G,\Sigma)\neq B(H)$.
\end{proof}

\section{Proof of the main theorem}

\begin{proof}[Proof of Theorem~\ref{thm:main}]
Assume, towards a contradiction, that
\[
B(H)\cong C_r^*(G,\Sigma)
\]
for some infinite-dimensional Hilbert space $H$, some locally compact Hausdorff
étale groupoid $G$, and some twist $\Sigma$. Let
\[
A=C_0(G^{(0)})\subseteq C_r^*(G,\Sigma)\cong B(H).
\]
By Corollary~\ref{cor:A-ellinftyX}, we have
\[
A\cong \ell^\infty(X)
\]
for some index set $X$.

If $X$ is finite, then $G^{(0)}$ is finite, contradicting
Corollary~\ref{cor:finite-units-impossible}. If $X$ is infinite, then
Theorem~\ref{thm:no-linftyX} implies that $C_r^*(G,\Sigma)\neq B(H)$,
again a contradiction. Thus no such $(G,\Sigma)$ exists.
\end{proof}

\begin{remark}
The argument does not require $G$ to be effective or topologically principal.
In particular, it excludes all étale twisted groupoid models for $B(H)$.
\end{remark}

\section{Open questions}

The result raises several natural questions.

\begin{enumerate}[label=\rm(\arabic*)]
\item Does the analogue of Theorem~\ref{thm:main} hold for the full twisted
$C^*$-algebra $C^*(G,\Sigma)$?

\item Does the conclusion remain valid if $G$ is not assumed to be étale?

\item What happens in the non-Hausdorff setting?

\item More generally, which von Neumann algebras can be realized as reduced
twisted groupoid $C^*$-algebras? For instance, can one obtain examples among
type~II algebras such as group von Neumann algebras?

\item Does there exist a separable $C^*$-algebra which is not isomorphic to
$C_r^*(G,\Sigma)$ for any locally compact (Hausdorff, étale) groupoid $G$ and
twist $\Sigma$?
\end{enumerate}

\bibliographystyle{alphaurl}

\end{document}